\documentclass[12pt]{amsart}
\usepackage{color,enumerate, mathrsfs,upref}
\usepackage[colorinlistoftodos]{todonotes}

\newtheorem{theorem}{Theorem}[section]  
\newtheorem{lemma}[theorem]{Lemma}
\newtheorem{proposition}[theorem]{Proposition}

\newtheorem*{theorem*}{Theorem}
\theoremstyle{definition}
\newtheorem{definition}[theorem]{Definition}
\newtheorem{example}[theorem]{Example}

\theoremstyle{remark}
\newtheorem{remark}[theorem]{Remark}
\numberwithin{equation}{section}

	\usepackage{amssymb}
	\graphicspath{ {./images/} }

	\newcommand{\tand}{\text{ and }}
	\newcommand{\tif}{\text{ if }}
	\newcommand{\eqtext}[1]{\ensuremath{\stackrel{\text{#1}}{=}}}
	\newcommand{\ssp}[2]{\hspace{#1mm} #2 \hspace{#1mm}} 

    \newcommand{\KP}{\texttt{KP}}
    \def\St{\mathcal{S}}
    \newcommand{\Sh}{\widehat{\mathcal{S}}}
    \def\le{\leqslant}
    \def\ge{\geqslant}
	
	\def\Im{\text{Im }}
	\def\Arg{\text{Arg }}
	\def\diam{\text{diam }}
	\newcommand{\dist}{\text{dist}}
    \newcommand{\D}{\mathbb{D}}
    \newcommand{\onto}{\xrightarrow{\textnormal{onto}}}

\textwidth 6in
\title[Disk to Domain Lipschitz Estimates]{Lipschitz Estimates for Conformal Maps from The~Unit Disk to Convex Domains}

\author{Christopher G. Donohue} 
\address{215 Carnegie, Department of Mathematics, Syracuse University, Syracuse, NY 13244, USA} 
\email{cgdonohu@syr.edu}

\subjclass[2010]{Primary 30C35; Secondary 30C20,  30F45}
\keywords{Conformal map, hyperbolic metric, Kulkarni-Pinkall metric}

\begin{document}
\baselineskip6.5mm

\begin{abstract}
We obtain an explicit uniform upper bound for the derivative of a conformal mapping of the unit disk onto a convex domain. This estimate depends only on the outer and inner radii of the domain, and on the minimum curvature radius of its boundary. Its proof is based on a M\"obius invariant metric of hyperbolic type, introduced by Kulkarni and Pinkall in 1994.
\end{abstract}

\maketitle

\section{Introduction}\label{sec:intro} 

Estimates for the derivatives of conformal maps, collectively known as distortion theorems, have historically been given on compact subsets of the domain~\cite[\S2.3]{Duren-book}. Uniform bounds for the derivative have received less attention. The classical theorem of Kellogg~\cite[Theorem~3.5]{Pommerenke-book} states that the derivative of a conformal map $f$ between $C^{1,\alpha}$ domains is uniformly bounded, but does not provide an explicit upper bound. 
Such an upper bound was recently obtained in~\cite{Kovalev2017} and was found to be useful both in the spectral theory of PDEs with applications to quantum  physics~\cite{Lotoreichik}, and as a global upper bound on the integral Hardy norm of $f'$ in fluid dynamics~\cite{Hassainia}. 

This paper improves the uniform upper bound on $|f'|$ from~\cite{Kovalev2017}. Our main result is stated below. 

\begin{theorem*}
Let $\Omega$ satisfy the $(R_O,R_I,R_C)$ condition in Definition~\ref{RadiiDefinition}. Then for any conformal map $f\colon  \D\onto\Omega$ fixing $0$ we have
\[
\|f'\|_{H^\infty} \le R_Ce^{2F(R_O,R_I,R_C)}
\]
where $F(R_O,R_I,R_C)$ is as in Theorem~\ref{DistanceBounds}. Equality is attained whenever $\Omega$ is a disk. 
\end{theorem*}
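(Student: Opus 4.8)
The plan is to convert the estimate into a statement about the hyperbolic geometry of $\Omega$ alone, and then feed in Theorem~\ref{DistanceBounds}. Normalize the hyperbolic metric so that its density on $\D$ is $\lambda_{\D}(z)=(1-|z|^2)^{-1}$, and write $\lambda_\Omega,\mu_\Omega$ for the hyperbolic and Kulkarni--Pinkall densities on $\Omega$, $d_\Omega,d_\Omega^{\mathrm{KP}}$ for the corresponding distances, and $\delta(w)=\dist(w,\partial\Omega)$. Conformal invariance gives the identity $|f'(z)|=\lambda_{\D}(z)/\lambda_\Omega(f(z))$. Since the $(R_O,R_I,R_C)$ condition makes $\partial\Omega$ a $C^{1,1}$ curve, Kellogg's theorem extends $f$ to a $C^1$ diffeomorphism $\overline{\D}\to\overline\Omega$; as $\log|f'|$ is harmonic, $\|f'\|_{H^\infty}=\max_{\zeta\in\partial\D}|f'(\zeta)|$, and since $f$ is conformal up to the boundary it sends the radius $[0,\zeta)$ to a curve meeting $\partial\Omega$ orthogonally at $f(\zeta)$, so $\delta(f(r\zeta))=(1-r)|f'(\zeta)|+o(1-r)$ as $r\to1$. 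Combining this with $\lambda_{\D}(z)=\cosh^2 d_{\D}(0,z)$ (hence $1-r=2e^{-2d_{\D}(0,r\zeta)}(1+o(1))$) and the isometry identity $d_{\D}(0,r\zeta)=d_\Omega(0,f(r\zeta))$ yields
\[
|f'(\zeta)|=\tfrac12\lim_{r\to1}\delta\big(f(r\zeta)\big)\,e^{2d_\Omega(0,f(r\zeta))},\qquad\text{whence}\qquad \|f'\|_{H^\infty}\le\tfrac12\sup_{w\in\Omega}\delta(w)\,e^{2d_\Omega(0,w)},
\]
an estimate in which $f$ no longer appears; note that for $\Omega=B(0,R)$ one has $\delta(w)e^{2d_\Omega(0,w)}=R+|w|$, with supremum $2R$, which will yield the equality case.

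\emph{Step 1: pinning the metrics near $\partial\Omega$.} For convex $\Omega$, comparison with the supporting half-plane at the foot of $w$ gives $\lambda_\Omega(w)\ge(2\delta(w))^{-1}$, and $\lambda_\Omega\le\mu_\Omega$ always, so $d_\Omega(0,w)\le d_\Omega^{\mathrm{KP}}(0,w)$. The curvature part of the hypothesis provides, for $\delta(w)\le R_C$, an inscribed disk $D_w\subseteq\Omega$ of radius $R_C$ tangent to $\partial\Omega$ at the foot of $w$ and containing $w$ with $\dist(w,\partial D_w)=\delta(w)$; hence $\mu_\Omega(w)\le\lambda_{D_w}(w)=R_C\big/\big(\delta(w)(2R_C-\delta(w))\big)$, so near $\partial\Omega$ both $\lambda_\Omega(w)$ and $\mu_\Omega(w)$ agree with $(2\delta(w))^{-1}$ up to a factor tending to $1$, while $0\le\mu_\Omega-\lambda_\Omega\le(2R_C)^{-1}$ there. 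This is exactly what upgrades the crude comparisons to the sharp normalizing constant $R_C$ and keeps $d_\Omega^{\mathrm{KP}}$ within a bounded distance of $d_\Omega$ near $\partial\Omega$.

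\emph{Step 2: the distance estimate (the crux).} It now suffices to bound $\delta(w)e^{2d_\Omega(0,w)}\le\delta(w)e^{2d_\Omega^{\mathrm{KP}}(0,w)}$ uniformly, i.e.\ to prove $d_\Omega^{\mathrm{KP}}(0,w)\le\tfrac12\log\big(2R_C/\delta(w)\big)+F(R_O,R_I,R_C)$, which is precisely the content of Theorem~\ref{DistanceBounds}. The M\"obius invariance of the Kulkarni--Pinkall metric is what confines the right-hand side to the three radii: using that $\mu_\Omega$ is sandwiched (Step~1) between $\lambda_\Omega$ and the explicit hyperbolic densities of inscribed disks, one estimates $\int_\gamma\mu_\Omega$ along a path from $0$ to a point $w$ near $\partial\Omega$ via the concentric inscribed disk $B(0,R_I)$ near $0$ and the osculating disk $D_w$ near $\partial\Omega$, with the size ratio $R_O/R_I$ entering through the containment $B(0,R_I)\subseteq\Omega\subseteq B(0,R_O)$; for a disk $B(c,R)$ the off-centre part of this bound is exactly $\operatorname{arctanh}\frac{R_O-R_I}{R_O+R_I}=\operatorname{arctanh}\frac{|c|}{R}$, and in general the remaining size and curvature parts assemble into $F$. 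This is the one genuinely hard step --- producing an explicit, uniform comparison of the true hyperbolic (equivalently Kulkarni--Pinkall) distance from the origin with the elementary surrogate $\tfrac12\log(1/\delta)$, the defect being controlled by $R_O,R_I,R_C$ alone; everything else is standard hyperbolic-metric bookkeeping on convex domains, and it is exactly here that the M\"obius-invariant Kulkarni--Pinkall metric is the right tool.

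\emph{Step 3: assembly and equality.} Steps~1 and~2 give $\|f'\|_{H^\infty}\le\tfrac12\sup_{w\in\Omega}\delta(w)e^{2d_\Omega(0,w)}\le\tfrac12\cdot 2R_Ce^{2F(R_O,R_I,R_C)}=R_Ce^{2F(R_O,R_I,R_C)}$. When $\Omega$ is a disk $B(c,R)$ we have $R_C=R$, $D_w=\Omega$, $\mu_\Omega=\lambda_\Omega$, $d_\Omega^{\mathrm{KP}}=d_\Omega$ and $F(R_O,R_I,R_C)=\operatorname{arctanh}\frac{|c|}{R}$; every inequality above becomes an equality in the limit at the boundary point of $\Omega$ farthest from $0$, and $\|f'\|_{H^\infty}=R\cdot\frac{R+|c|}{R-|c|}=R_Ce^{2F}$ --- in particular $\|f'\|_{H^\infty}=R_C$ when $0$ is the centre. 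Hence equality is attained exactly when $\Omega$ is a disk.
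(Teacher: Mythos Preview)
Your overall strategy coincides with the paper's: translate $\|f'\|_{H^\infty}$ into a statement about $h_\Omega(0,\cdot)$ via conformal invariance, then control $h_\Omega(0,w)$ for $w$ near $\partial\Omega$ by passing through the centre $a$ of an inscribed $R_C$--disk tangent at the foot of $w$ and invoking Theorem~\ref{DistanceBounds} for the piece $h_\Omega(0,a)$. The paper carries this out by a short contradiction argument on $(1-|z|)$ versus $\dist(f(z),\partial\Omega)$; your limit identity $|f'(\zeta)|=\tfrac12\lim_{r\to1}\delta(f(r\zeta))e^{2d_\Omega(0,f(r\zeta))}$ is an equivalent packaging of the same computation.

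There is, however, a genuine mislabeling that leaves a gap as written. Theorem~\ref{DistanceBounds} does \emph{not} assert
\[
d_\Omega(0,w)\ \le\ \tfrac12\log\!\big(2R_C/\delta(w)\big)+F(R_O,R_I,R_C)\quad\text{for all }w\in\Omega;
\]
it only bounds $h_\Omega(0,a)\le F$ for points $a$ with $D(a,R_C)\subset\Omega$. The missing step (which the paper makes explicit) is the triangle inequality through such a centre: with $a$ the centre of your osculating disk $D_w$ and $|w-a|=R_C-\delta(w)$, domain monotonicity gives $h_\Omega(a,w)\le h_{D(a,R_C)}(a,w)=\tfrac12\log\frac{2R_C-\delta(w)}{\delta(w)}$, and adding $h_\Omega(0,a)\le F$ yields exactly the inequality you need. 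Your Step~2 gestures at this with the phrase ``via \ldots\ the osculating disk $D_w$ near $\partial\Omega$'', but then misattributes the full inequality to Theorem~\ref{DistanceBounds}; the subsequent sketch is heuristic and does not supply the argument. Relatedly, replacing the limit by $\sup_{w\in\Omega}\delta(w)e^{2d_\Omega(0,w)}$ is stronger than you need and is not what you actually justify: your triangle-inequality route only applies when $\delta(w)\le R_C$ (so that $w\in D_w$), whereas the boundary limit already confines you to that regime. Keep the estimate as a $\limsup_{\delta(w)\to0}$ and insert the explicit triangle-inequality step through $a$; then your argument is complete and matches the paper's. The detour through $d_\Omega^{\mathrm{KP}}$ in Step~2 is also unnecessary here, since the required bound is on the hyperbolic distance and Theorem~\ref{DistanceBounds} already states its conclusion for $h_\Omega$.
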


For comparison, the bound in~\cite{Kovalev2017} is sharp only for disks centered $0$, that is, only when $f$ is a linear function. The main tool we use to estimate $f'$ is the Kulkarni-Pinkall metric~\cite{KulkarniPinkall} which is defined in Section~\ref{sec:prelim}. A precise estimate for this metric is derived in Section~\ref{sec:hyperbolic}. Section~\ref{sec:derivative} contains the proof of the main result. The paper concludes with examples in Section~\ref{sec:examples}. 

\section{Definitions and preliminary results}\label{sec:prelim}

Throughout this section let $\Omega\subsetneq \mathbb C$ be a simply connected domain. 
The \emph{hyperbolic metric}~\cite[\S4.6]{Pommerenke-book} of $\Omega$ is conformally invariant and has constant Gaussian curvature $-4$. The hyperbolic distance between $z,w\in \Omega$ is denoted $h_{\Omega}(z,w)$, and the density at $z$ by $\lambda_{\Omega}(z)$. When $\Omega$ is a disk of radius $r$ and $z$ is a point at distance $d$ from its center, we have 
\begin{equation}\label{hyp-density-in-disk}
 \lambda_{\Omega}(z) = \frac{r}{r^2-d^2}.   
\end{equation}
For more general $\Omega$ however, explicit formulas for $\lambda_{\Omega}$ or $h_{\Omega}$ are tied to explicit conformal maps between $\Omega$ and the unit disk---in most cases neither exist. For this reason alternative metrics are used as approximations to the hyperbolic~\cite{HKMbook}. The most common is the \emph{quasihyperbolic metric} with density $1/\dist(z, \partial\Omega)$. Quasihyperbolic
distance,  usually denoted $k(z,w)$, employs the surprisingly effective simplification of substituting inverse distance to the boundary for the hyperbolic density. Using our $-4$ curvature convention, we have the well known comparison $\frac14 k\le h \le k$. The quasihyperbolic metric was used to attain the bound in~\cite{Kovalev2017}. We will improve this bound by using a more refined metric in its place.

\begin{definition}\label{def:KPmetric}
Distance in the \emph{Kulkarni-Pinkall (KP) metric} between $z,w\in \Omega$ is denoted by $\KP_{\Omega}(z,w)$, the density by $\mu_{\Omega}(z)$. If we take $\Delta$ to be the set of all disks $D$ such that $z\in D\subset \Omega$, then
\begin{equation}\label{inf-def-mu}
\mu_{\Omega}(z)\eqtext{def}\inf_{D\in\Delta}\lambda_D(z).
\end{equation}
\end{definition}

The KP density is  comparable to the hyperbolic density~\cite[\S3]{HMM2005},
\begin{equation}\label{KPcomparison}
\frac12 \mu \le \lambda \le \mu,
\end{equation}
again shown with the $-4$ curvature convention. Note that the KP metric gives a better approximation to the hyperbolic metric than the quasihyperbolic does.

The KP metric was introduced by Kulkarni and Pinkall in a 1994 article~\cite{KulkarniPinkall} with an emphasis on its M\"obius invariance.  In different ways the KP and quasihyperbolic metrics both take advantage of the fact that the hyperbolic metric is monotone with respect to domain, that is, if $\Omega_1\subset \Omega_2 \subset\mathbb{C}$ are simply connected domains then
\begin{equation}\label{eq:HyperbolicDomainMonotonic}
\forall a,b\in \Omega_1,\quad h_{\Omega_1}(a,b)\ge h_{\Omega_2}(a,b).
\end{equation}
This can be seen as a consequence of the Schwarz-Pick lemma.

By~\cite[Theorem 3.5]{HMM2003} for each point $z$ in a simply connected domain $\Omega\subsetneq\mathbb C$, there exists a unique disk that attains the infimum in~\eqref{inf-def-mu}, referred to as the \textit{extremal disk} (this disk is understood in the sense of the Riemann sphere when $\Omega$ is unbounded). The extremal disk is determined by a subtle trade-off between the size of the disk and the proximity of its center to $z$.

\begin{lemma}\label{lemma-extremal-disk}~\cite[\S2.3]{HMM2005}
For a simply connected domain $\Omega\subsetneq\mathbb C$ and a point $z\in \Omega$, the $\KP_\Omega$~extremal disk for $z$ is the unique disk $D$ satisfying the condition that $z$ lies in the closure of the convex hull of $\partial D\cap \partial \Omega$ with respect to the hyperbolic metric on $D$.
\end{lemma}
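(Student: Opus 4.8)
The plan is to reduce to a normalized configuration and then prove an explicit \emph{iff}. Since the KP metric is M\"obius invariant, so is the property of being the $\KP_\Omega$-extremal disk for $z$, and the condition in the statement is M\"obius invariant too; as M\"obius maps act transitively on pairs (disk, interior point), I may post-compose $\Omega$ with a M\"obius transformation and assume $z=0$ and that the disk under consideration is $\D$, so $\D\subset\Omega$. Write $E=\partial\D\cap\partial\Omega$, a compact subset of $\partial\D$. For the center $0$ of $\D$, the hyperbolic condition ``$0$ lies in the closed hyperbolic convex hull of $E$ in $\D$'' is equivalent to the Euclidean statement $0\in\operatorname{conv}(E)$: both fail precisely when $E$ is contained in a closed round disk that avoids $0$ and meets $\partial\D$ orthogonally --- such a disk cuts $\D$ in a closed hyperbolic half-plane, i.e.\ gives a separating geodesic, and its existence is in turn equivalent to separating $0$ from $E$ by a Euclidean line. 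By~\eqref{hyp-density-in-disk}, $\lambda_\D(0)=1$, while $\lambda_{D(c,\rho)}(0)=\rho/(\rho^2-|c|^2)$ for a genuine disk $D(c,\rho)=\{\,|w-c|<\rho\,\}$. Hence it suffices to show that $\D$ minimizes $\lambda_{D'}(0)$ over all disks $D'$ with $0\in D'\subset\Omega$ \emph{if and only if} $0\in\operatorname{conv}(E)$, and that in the affirmative case the minimizer is unique. Granting the already-cited existence and uniqueness of the extremal disk, the ``if'' direction then shows that any disk satisfying the condition in the statement is the unique minimizer, hence equals the extremal disk, while the ``only if'' direction shows the extremal disk itself satisfies that condition; so the extremal disk is exactly the unique disk satisfying it.

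For the ``if'' direction, assume $0\in\operatorname{conv}(E)$, say $0=\sum_i t_i p_i$ with $p_i\in E$, $t_i\ge0$, $\sum_i t_i=1$. A genuine-disk competitor $D(c,\rho)$ satisfies $|c|<\rho$ (since $0\in D(c,\rho)$) and $|p_i-c|\ge\rho$ for every $i$ (since $p_i\in\partial\Omega$ and $D(c,\rho)\subset\Omega$). The key point is that $\ell(w)=|w-c|^2-\rho^2-|w|^2+1$ is affine in $w$; as $|p_i|=1$ we get $\ell(p_i)=|p_i-c|^2-\rho^2\ge0$, so $\ell(0)=\sum_i t_i\,\ell(p_i)\ge0$, i.e.\ $\rho^2-|c|^2\le1$. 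Setting $s=\rho^2-|c|^2\in(0,1]$ and using $\rho\ge\sqrt s$, we obtain $\lambda_{D(c,\rho)}(0)=\rho/s\ge 1/\sqrt s\ge 1=\lambda_\D(0)$, with equality only when $c=0$ and $s=1$, i.e.\ only for $\D$. Half-plane competitors do not arise when $0\in\operatorname{conv}(E)$: the complement of a half-plane through $0$ is a closed half-plane, which would then contain $E$ and hence $\operatorname{conv}(E)\ni0$, contradicting $0$ lying in the open half-plane. Thus $\D$ is the unique minimizer.

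For the ``only if'' direction I would argue the contrapositive: if $0\notin\operatorname{conv}(E)$, I exhibit a disk $D'$ with $0\in D'\subset\Omega$ and $\lambda_{D'}(0)<1$. If $\dist(0,\partial\Omega)>1$, the concentric disk of radius $\dist(0,\partial\Omega)$ works, so assume $\dist(0,\partial\Omega)=1$; then $E\ne\emptyset$, and after a further rotation compactness of $E$ gives $\delta:=\min_{p\in E}\Re p>0$. For small $r>0$ take $D(-r,\rho_r)$ with $\rho_r=\dist(-r,\partial\Omega)$, so $D(-r,\rho_r)\subset\Omega$ and $0\in D(-r,\rho_r)$. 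For $p\in E$ one has $|p+r|^2=1+2r\Re p+r^2\ge 1+2r\delta+r^2$, so $\dist(-r,E)\ge 1+cr$ for a fixed $c>0$ and small $r$; a routine compactness argument gives the same bound for $\dist(-r,\partial\Omega\setminus E)$ (a near-violating sequence of boundary points would accumulate on $E$, where $\Re\ge\delta$ keeps the distance too large). Hence $\rho_r\ge 1+cr$, so $\rho_r(\rho_r-1)\ge cr>r^2$ for small $r$, and therefore $\lambda_{D(-r,\rho_r)}(0)=\rho_r/(\rho_r^2-r^2)<1$; thus $\D$ is not extremal.

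I expect the ``only if'' step to be the main obstacle: turning ``$0$ is not in the hull'' into an explicit better disk rests on the distance bound $\rho_r\ge1+cr$, and proving it requires the compactness argument that keeps the rest of $\partial\Omega$ away from the perturbed center, as well as the bookkeeping for half-plane competitors when $\Omega$ is unbounded and for the reduction $\dist(0,\partial\Omega)=1$. The ``if'' computation, and the passage from the hyperbolic hull to a Euclidean one at the center, are by comparison routine.
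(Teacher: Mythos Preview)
The paper does not prove this lemma at all: it is quoted verbatim from~\cite[\S2.3]{HMM2005} and used as a black box. There is therefore no ``paper's own proof'' to compare against; what you have written is a self-contained argument that the paper simply imports from the literature.

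That said, your argument is essentially correct. The M\"obius normalization to $z=0$, $D=\D$ is legitimate, and the reduction at the center of $\D$ from the hyperbolic hull of $E\subset\partial\D$ to the Euclidean $\operatorname{conv}(E)$ is valid (a geodesic in $\D$ separating $0$ from $E$ corresponds exactly to a circle orthogonal to $\partial\D$ with $0$ outside, which in turn corresponds to a separating Euclidean line). The ``if'' step via the affine function $\ell(w)=|w-c|^2-\rho^2-|w|^2+1$ is clean: it gives $\rho^2-|c|^2\le 1$, hence $\lambda_{D(c,\rho)}(0)=\rho/(\rho^2-|c|^2)\ge 1$ with equality only for $\D$, and your exclusion of half-plane competitors under $0\in\operatorname{conv}(E)$ is correct. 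For the contrapositive, the perturbed center $-r$ with $\rho_r=\dist(-r,\partial\Omega)$ works as you say: the compactness sketch (a near-violating boundary sequence must accumulate on $E$, where $\Re\ge\delta$ forces $|q_n+r_n|^2\ge 1+\delta r_n$) is routine to complete, and then $\rho_r\ge 1+cr$ gives $\rho_r(\rho_r-1)\ge cr(1+cr)>r^2$ for small $r$, i.e.\ $\lambda_{D(-r,\rho_r)}(0)<1$. Combined with the existence and uniqueness of the extremal disk already cited in the paper, this yields the full characterization.
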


\begin{remark}\label{rem:ExtremalDiskDomainMonotone}
Suppose $\Omega_1\subset\Omega_2$ are domains and $D$ is the $\KP_{\Omega_2}$~extremal disk for some $z\in\Omega_2$. If $D\subset\Omega_1$, then $D$ is also the $\KP_{\Omega_1}$~extremal disk for $z$.
\end{remark}

\begin{example}\label{ex:ExtremalDiskInStrip}
The $\KP_H$~extremal disk at $x\in\mathbb{R}$ for the infinite strip $H \eqtext{def} \{|\Im z| <1\}$ is $D \eqtext{def} \{z\colon|x-z|<1\}$ with $\partial D\cap\partial H = \{x\pm i\}$. Here $(x-i,x+i)$ is a hyperbolic geodesic in $D$ and is the hyperbolic convex hull of $\partial D\cap\partial H$ in $D$.
\end{example}

\begin{example}\label{ex:ExtremalDiskInSector}
Fix $\theta\in(0,\pi/2)$. The $\KP_S$~extremal disk at $x>0$ for the sector $S\eqtext{def}\{|\Arg z|<\theta\}$ is the disk $D$ with $\partial D\cap\partial S=\{xe^{\pm i\theta}\}$ and $A\eqtext{def}\{xe^{it}:|t|<\theta\}\subset D$. Here $A$ is a hyperbolic geodesic in $D$ and is the hyperbolic convex hull of $\partial D\cap\partial S$ in~$D$.
\end{example}

\begin{example}\label{ex:ExtremalDiskInDisk}
The $\KP$~extremal disk for every point in a domain $D$ that is itself a disk is $D$, because the convex hull of $\partial D$ with respect to the hyperbolic metric on $D$ is all of $D$.
\end{example}

\begin{example}\label{ex:HullOfCircularArc}
Suppose a domain $S$ contains a disk $D$ such that $\Gamma\eqtext{def}\partial D\cap\partial S$ is a circular arc. Then the convex hull of $\Gamma$ in the hyperbolic metric on $D$ is the portion of $D$ bounded by $\Gamma$ and a circle orthogonal to $\Gamma$ at both of its endpoints. Here we use circle in the sense of the Riemann sphere so that if $\Gamma$ is a semicircle, the orthogonal circle is a line.
\end{example}

The inclusion of Remark~\ref{rem:ExtremalDiskDomainMonotone} and subsequent examples are to clarify the extremal disks on the segment between centers of a stadium as described in Definition~\ref{definition-stadium}, and in the proof of Lemma~\ref{KPdistance}. Throughout the paper $\D$ is the unit disk and $D(a,r)\eqtext{def} \{z\colon |a-z|<r\}$.

\begin{definition}\label{RadiiDefinition}
Suppose $\Omega$ is a simply connected \emph{convex} domain that contains~$0$ and has $C^{1,1}$-smooth boundary.  We say that such a domain satisfies the $(R_O, R_I, R_C)$ condition if: 
\begin{itemize}
    \item $R_O$, $R_I$, $R_C$ are all positive,
    \item $R_O$ is the minimal $r$ such that $\Omega\subset D(0,r)$, 
    \item $R_I$ is the maximal $r$ such that $D(0,r)\subset\Omega$,
    \item $\Omega$ can be expressed as a union of open disks of radius~$R_C$.
\end{itemize}
\end{definition}

The subscripts in Definition~\ref{RadiiDefinition} serve to indicate that $R_O$ is the \emph{outer radius}, $R_I$ the \emph{inner radius}, and $R_C$ a \emph{curvature radius}. The following remark clarifies the geometric meaning of $R_C$. 

\begin{remark}\label{rem:CurvatureRadius}  Suppose $\Omega$ and $R_C$ are as in  Definition~\ref{RadiiDefinition}. Then for every $w\in \partial \Omega$ there exists $a\in\Omega$ such that $D(a, R_C)\subset \Omega$ and $w\in \partial D(a, R_C)$. 

Indeed, one can take a sequence $z_n\to w$ of points $z_n\in \Omega$ and cover each $z_n$ with a disk $D(a_n, R_C)\subset \Omega $. The sequence $\{a_n\}$ has a convergent subsequence $\{a_{n_k}\}$. Let $a$ be its limit. Clearly $D(a, R_C)\subset  \Omega$, which implies $|a-w|\ge R_C$. On the other hand, $|a-w|=\lim_{k\to\infty} |a_{n_k}-z_{n_k}|\le R_C$. 
\end{remark}

We denote and define the Hardy norm of a holomorphic function $f$ on $\D$ by 
\[
\|f\|_{H^p} \eqtext{def} \sup_{0<r<1}\left(\int_0^{2\pi} |f(re^{i\theta})|^p \frac{d\theta}{2\pi}  \right)^{1/p}, \quad  \|f\|_{H^\infty} = \sup_{\D}|f|.
\]
Using the KP metric we will improve the derivative bound from~\cite{Kovalev2017}, which relied on the quasihyperbolic metric and can be stated as 
\begin{equation}\label{old-bound}
\|f'\|_{H^\infty} \le R_C\exp\{2(R_O-R_C)\Phi(R_I,R_C)\}
\end{equation}
where \[
\Phi(a,b)\eqtext{def}\left\{\begin{array}{cc} \frac{\log a - \log b}{a-b}, &\tif a\ne b\\\frac{1}{a}, &\tif a=b.\end{array}\right.
\]
according to~\cite[Proposition~19]{Lotoreichik}. Our improved bound (Theorem~\ref{Main}) is sharp in a wider class of convex domains than~\eqref{old-bound}.

\section{Estimates for the hyperbolic metric in convex domains}\label{sec:hyperbolic} 

We introduce a class of convex domains which are convenient for estimating the hyperbolic metric.

\begin{definition}\label{definition-stadium} A \emph{stadium} is the convex hull of the union of two open disks in the plane. It is denoted $\St(r_1,r_2,d)$ where $r_1$ and $r_2$ are the radii of the two disks and $d$ is the distance between their centers. 
\end{definition}

The notation $\St(r_1,r_2,d)$ in Definition~\ref{definition-stadium} omits the centers of the disks that form the stadium since they are usually irrelevant to the hyperbolic geometry of the domain. The centers will be given in context when relevant.

\begin{lemma}\label{lemma-stadium boundary smooth} The boundary of a stadium is $C^{1,1}$-smooth. That is, the unit speed parameterization of its boundary has Lipschitz continuous derivative. 
\end{lemma}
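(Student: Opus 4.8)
The plan is to describe the boundary of a stadium $\St(r_1,r_2,d)$ explicitly as a concatenation of circular arcs and line segments, and then verify that the unit-speed parameterization has Lipschitz derivative by checking this at the finitely many junction points. First I would set up coordinates: place the centers of the two disks at $c_1 = (0,0)$ and $c_2 = (d,0)$, with radii $r_1$ and $r_2$. If one disk is contained in the other (which happens when $d \le |r_1 - r_2|$), the stadium is just the larger disk and there is nothing to prove, so assume $d > |r_1 - r_2|$. The key geometric fact is that the boundary of the convex hull of $D(c_1,r_1)\cup D(c_2,r_2)$ consists of two circular arcs — one sub-arc of $\partial D(c_1,r_1)$ and one sub-arc of $\partial D(c_2,r_2)$ — joined by the two common outer tangent line segments to the two circles. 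I would justify that the outer tangent lines exist and are distinct under the assumption $d > |r_1 - r_2|$, and that the convex hull is exactly the region bounded by these four pieces (this is a standard convexity argument: the hull is the intersection of all supporting half-planes, and every supporting line is either tangent to one of the circles along its free arc or is one of the two common tangents).

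Next I would write down the unit-speed parameterization $\gamma\colon [0,L] \to \partial\St$ going around the boundary: along a circular arc of radius $r$ the unit tangent is $\gamma'(s) = (-\sin(s/r + \phi), \cos(s/r+\phi))$ for appropriate phase $\phi$, so $\gamma'$ is Lipschitz there with constant $1/r \le 1/\min(r_1,r_2)$; along a line segment $\gamma'$ is constant, hence Lipschitz with constant $0$. The only thing left is continuity of $\gamma'$ across the four junction points where an arc meets a tangent segment. At such a point the tangent segment is, by construction, tangent to the circle, so the direction of $\gamma'$ matches from both sides; thus $\gamma'$ is continuous on all of $[0,L]$ (and periodic). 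A piecewise-Lipschitz function that is globally continuous is globally Lipschitz, with constant at most the maximum of the pieces' constants, namely $1/\min(r_1,r_2)$. This gives $\gamma' \in \mathrm{Lip}$, which is the definition of $C^{1,1}$.

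I expect the main obstacle to be the bookkeeping in the degenerate and near-degenerate cases rather than any deep point: one must handle $r_1 = r_2$ (the two tangent lines are parallel to $c_1c_2$ and the arcs are exact semicircles), the case $d \le |r_1-r_2|$ (one disk swallows the other), and confirm that when $d > |r_1-r_2|$ the free arcs on each circle are nonempty so that the boundary genuinely has the claimed four-piece structure. A clean way to avoid case analysis is to argue more abstractly: the convex hull of a compact set with $C^{1,1}$ boundary pieces has $C^{1,1}$ boundary because passing to the convex hull can only replace boundary arcs by line segments tangent to the retained arcs, and tangency forces $C^1$-matching at the (finitely many) new junctions while the curvature bound $1/\min(r_1,r_2)$ is inherited from the original circles and is not increased by inserting straight segments. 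Either route reduces the lemma to the elementary observation that a continuous, piecewise-$C^{1,1}$ curve with matching tangents at the breakpoints is $C^{1,1}$.
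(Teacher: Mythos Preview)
Your proposal is correct and follows essentially the same approach as the paper: decompose the boundary into circular arcs and tangent segments, bound the Lipschitz constant of the unit tangent on each piece by $1/\min(r_1,r_2)$, and conclude global Lipschitz continuity. Your version is in fact more careful than the paper's terse proof, since you explicitly verify the $C^1$ matching at the junction points (via tangency) and handle the degenerate case $d\le |r_1-r_2|$, both of which the paper leaves implicit.
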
 

\begin{proof}
The boundary of a stadium $\St(r_1,r_2,d)$ consists of circular arcs, possibly joined by tangent line segments. If we take $w$ to be a unit speed parameterization of the boundary and $r=\min(r_1, r_2)$, then the inequality
\begin{equation}\label{eq-lip-deriv}
|w'(t_1)-w'(t_2)|\le\frac{1}{r}|t_1-t_2|    
\end{equation}
holds on each of two circular arcs. It also holds on linear segments where $w'$ is constant. It now follows that $w'$ is Lipschitz continuous.
\end{proof}

\begin{definition}\label{def:infinite-sector}
Let $D_{r_1}$ and $D_{r_2}$ denote the two open disks from Definition~\ref{definition-stadium}. If $d>|r_2-r_1|$, then the boundary of the stadium $\St\eqtext{def} \St(r_1,r_2,d)$ is composed of two circular arcs and two congruent line segments. These segments can be extended to circumscribe an \emph{infinite sector} $\Sh$ around $\St$. The opening of the sector is $2\theta$ where $\theta\eqtext{def}\arcsin\frac{|r_2-r_1|}{d}$. When working with a sector $\Sh(r_1,r_2,d)$ it is useful to assume that after a rigid motion, $\Sh=\{z\colon |\Arg z|<\theta\}$.
\end{definition}

\begin{lemma}\label{KPdistance}
Given a stadium $\St(R,r,d)$ where $r\le R$, let $\theta=\arcsin\frac{R-r}{d}$ if $d\ge R-r$, and $\theta=\pi/2$ otherwise. The $\KP_{\St}$ distance between the centers of $D_R$ and $D_r$ is given by
\begin{subequations}
\begin{alignat}{2}
&\frac{d}{r}
    &&\tif r=R \label{eq:KPdistanceA}\\
&\text{and when $r<R$}&& \nonumber\\
&\frac12\log\frac{R+d}{R-d}
    &&\tif d\le R\tan(\theta/2) \label{eq:KPdistanceB}\\
&\frac12\left[\cot\frac{\theta}{2}\log\left(\frac{R}{r}\cos\theta\right)+\log\frac{1+\tan(\theta/2)}{1-\tan(\theta/2)}\right] &&\tif d> R\tan(\theta/2). \label{eq:KPdistanceC}
\end{alignat}
\end{subequations}
\end{lemma}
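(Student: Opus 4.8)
The plan is to compute the KP distance between the two centers by identifying the $\KP_{\St}$ extremal disk at each point on the segment joining the centers, then integrating the KP density along that segment. By symmetry and convexity, the extremal disks along this segment are themselves disks whose boundary meets $\partial\St$ along circular arcs (pieces of $\partial D_R$ or $\partial D_r$) or along the straight sides, so Examples \ref{ex:ExtremalDiskInDisk}, \ref{ex:HullOfCircularArc} and \ref{ex:ExtremalDiskInSector}, together with Remark \ref{rem:ExtremalDiskDomainMonotone}, give us the extremal disk explicitly in each geometric regime.

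Case \eqref{eq:KPdistanceA} ($r=R$): here $\St$ is itself a stadium with two straight sides and the segment between centers has length $d$; at every point of that segment the extremal disk is the obvious radius-$R$ disk tangent to both sides (cf.\ Example \ref{ex:ExtremalDiskInStrip}, rescaled), with hyperbolic density $1/R$ at its center, so the distance is $d/R$. Case \eqref{eq:KPdistanceB}: when $d$ is small relative to $R\tan(\theta/2)$, I expect the disk $D_R$ itself (or a translate of it still contained in $\St$ whose boundary arc lies on $\partial D_R$) to be extremal at \emph{both} centers and along the whole segment; then by Example \ref{ex:ExtremalDiskInDisk} the KP density equals $\lambda_{D_R}$, and \eqref{hyp-density-in-disk} gives
\[
\KP_{\St}=\int_0^d \frac{R\,dt}{R^2-t^2}=\tfrac12\log\frac{R+d}{R-d}.
\]
The threshold $d=R\tan(\theta/2)$ should be exactly the value at which the hyperbolic convex hull in $D_R$ of the arc $\partial D_R\cap\partial\St$ stops containing the center of $D_r$ — this is where Example \ref{ex:HullOfCircularArc}, with its circle orthogonal to the arc at both endpoints, pins down the transition.

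Case \eqref{eq:KPdistanceC} ($d>R\tan(\theta/2)$) is where the real work is, and it is the main obstacle. Here the segment splits into two subintervals: near the center of $D_R$ the extremal disk is still $D_R$ (Example \ref{ex:ExtremalDiskInDisk}), contributing $\tfrac12\log\frac{1+\tan(\theta/2)}{1-\tan(\theta/2)}$ from integrating $R/(R^2-t^2)$ over $t\in[0,R\tan(\theta/2)]$; past the transition point the straight sides of $\St$ become the relevant part of the boundary, so by Remark \ref{rem:ExtremalDiskDomainMonotone} the $\KP_{\St}$ extremal disk agrees with the $\KP_{\Sh}$ extremal disk for the circumscribing sector $\Sh=\{|\Arg z|<\theta\}$, which by Example \ref{ex:ExtremalDiskInSector} is the disk through $xe^{\pm i\theta}$ containing the arc $\{xe^{it}:|t|<\theta\}$. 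I would compute $\lambda$ of that disk at its relevant point — a disk of radius $x\sin\theta$ whose center is at distance $x\cos\theta$ from the arc point — obtaining a KP density proportional to $1/(x\sin\theta)$ along the sector, and integrate in $x$ from the transition radius out to the center of $D_r$ (at radial distance governed by $r$ and $\theta$), producing the $\cot(\theta/2)\log\bigl(\tfrac{R}{r}\cos\theta\bigr)$ term. The delicate points will be: verifying the extremal disk really is the sector's extremal disk on the whole second subinterval (using that $D_r\subset\St\subset\Sh$ and that the relevant boundary arcs are the straight ones), getting the endpoints of integration right in terms of $R$, $r$, $d$, $\theta$ via the geometry of Definition \ref{def:infinite-sector}, and checking that the two pieces match continuously at $d=R\tan(\theta/2)$, which serves as a useful sanity check on the constants.
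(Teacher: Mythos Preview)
Your plan is correct and matches the paper's proof: determine the extremal disk at each point of the segment between centers (the centered radius-$r$ disk when $r=R$ via the strip; $D_R$ itself while the hyperbolic hull of $\partial D_R\cap\partial\St$ still covers the point, with the transition occurring exactly at $R\tan(\theta/2)$ via Example~\ref{ex:HullOfCircularArc}; and the sector extremal disk thereafter, justified through Remark~\ref{rem:ExtremalDiskDomainMonotone}), then integrate the resulting KP density. One minor note: the paper quotes the sector density $\mu_{\Sh}(x)=\tfrac{1}{2x}\cot(\theta/2)$ from~\cite{HMM2005} rather than deriving it, and your sketch of that extremal disk has a small slip---for the axis point $x$ the disk has center $x/\cos\theta$ and radius $x\tan\theta$ (not radius $x\sin\theta$), with $x$ at distance $x(1-\cos\theta)/\cos\theta$ from the center, which upon plugging into~\eqref{hyp-density-in-disk} gives exactly $\tfrac{1}{2x}\cot(\theta/2)$.
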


Observe that whenever~\eqref{eq:KPdistanceC} applies we have $\tan(\theta/2)\in (0,1)$.

\begin{proof}
Throughout this proof we refer to the disks $D_r$ and $D_R$ as well as their centers; these are the disks from Definition~\ref{definition-stadium}.

If $r=R$, then $\St$ is contained in an infinite strip of width $2r$. By  Remark~\ref{rem:ExtremalDiskDomainMonotone} and in light of Example~\ref{ex:ExtremalDiskInStrip}, at every point $z$ along the segment connecting the centers the extremal disk is $D(z,r)$. Then $\mu_\St(z)=\lambda_{D(0,r)}(0)$, from~\eqref{hyp-density-in-disk} the density is $1/r$, and integrating this along a segment of length $d$ yields the result in~\eqref{eq:KPdistanceA}.

Next assume $d\le R-r$; then $\theta=\pi/2$. In this case we clearly have $d<R \tan(\theta/2)$, and furthermore $D_r\subset D_R$ so $\St=D_R$. Like Example~\ref{ex:ExtremalDiskInDisk}, for every point in $\St$ the $\KP_{\St}$~extremal disk will be $D_R$. Thus the KP distance between the centers is the KP length of a radial segment of length $d$, with the center of $D_R$ as one endpoint. This distance is equivalent to $h_{D(0,R)}(0,d)$. Evaluating this by integrating~\eqref{hyp-density-in-disk} gives the formula in~\eqref{eq:KPdistanceB}.

Now assume that $R-r<d\le R\tan(\theta/2)$. We will show that the segment connecting the centers is contained in the convex hull of $\partial D_R\cap\partial\St$ in the hyperbolic metric on $D_R$, and thus $D_R$ is the extremal disk along the whole segment. Let $\Sh=\{ |\Arg z|<\theta\}$ as in Definition~\ref{def:infinite-sector}. Then it is easily verified that $\partial D_R\cap\partial\Sh=\{Re^{\pm i\theta}\cot\theta\}$ and $\partial D_R\cap\partial\St$ has endpoints $\{Re^{\pm i\theta}\cot\theta\}$. The convex hull of $\partial D_R\cap\partial\St$ in the hyperbolic metric on $D_R$ is the portion of $D_R$ bounded by $\partial D_R\cap\partial\St$ and $D(0,R\cot\theta)\cap\{|\Arg z|<\theta\}$ (see Example~\ref{ex:HullOfCircularArc}). The distance from the center of $D_R$, located at $R\csc\theta$, to the boundary of $D(0,R\cot\theta)$ along the real axis is $R\csc\theta-R\cot\theta=R\tan(\theta/2)$. Then because $d\le R\tan(\theta/2)$, the segment is contained in the convex hull, $D_R$ is the $\KP_{\St}$~extremal disk along the segment, and the center of $D_R$ is one endpoint of the segment. The $\KP_{\St}$ length can again be calculated as $h_{D(0,R)}(0,d)$. This completes the result in~\eqref{eq:KPdistanceB}.

Finally, assume $d>R\tan(\theta/2)$, and thus the segment connecting the centers extends beyond the convex hull of $\partial D_R\cap\partial\St$. We will divide the segment into a \emph{proximal segment} $[r\csc\theta,R\cot\theta]$ and a \emph{distal segment} $[R\cot\theta,R\csc\theta]$, where proximal and distal indicate relative position with respect to the vertex at 0. The distal segment will have as its extremal disk $D_R$, and as before we calculate the length as

\begin{equation}
\KP_\St(R\cot\theta,R\csc\theta)=\int_0^{R(\csc\theta-\cot\theta)} \lambda_{D(0,R)}(z)\,dz =\frac12\log\frac{1+\tan(\theta/2)}{1-\tan(\theta/2)}.
\end{equation}

For the proximal segment we rely on work done by Herron, Ma, and Minda~\cite[p. 331]{HMM2005}. They produced a formula for the KP metric density at any point in an infinite sector. After adjusting the notation, the curvature convention, and taking advantage of the simplification that our segment is along the central axis, the formula is

\begin{equation}\label{HMMSectorFormula}
\mu_{\Sh}(z)=\frac{1}{2z}\cot(\theta/2). 
\end{equation}

We need to show that the $\KP_{\Sh}$~extremal disk for every point on the proximal segment is contained in $\St$. Then, by Remark~\ref{rem:ExtremalDiskDomainMonotone} it will also be the extremal disk in $\St$. This will justify using the infinite sector formula in~\eqref{HMMSectorFormula} to give the KP density in our stadium $\St$. It will suffice to show that the extremal disk for the two endpoints of the proximal segment are in $\St$.

The proximal endpoint of the proximal segment is $r\csc\theta$, constructing the extremal disk in $\Sh$ for $r\csc\theta$ gives a disk tangent to $\partial\Sh$ at $\{re^{\pm i\theta}\csc\theta\}$. The disk $D_r$ is tangent to $\partial\Sh$ at $\{re^{\pm i \theta}\cot\theta\}$, and because $\csc\theta<\cot\theta$ in $(0,\pi)$ the $\KP_{\Sh}$~extremal disk for the endpoint $r\csc\theta$ is far enough from the vertex to be contained in $\St$. The other endpoint is $R\cot\theta$, and we have already seen that $\forall z\in\St\colon |z|\ge R\cot\theta$, the $\KP_\St$~extremal disk is $D_R$. 

We now calculate the KP length of the proximal segment in $\St$ by integrating the density given in~\eqref{HMMSectorFormula}:
\begin{equation}
\int_{r\csc\theta}^{R\cot\theta} \frac{1}{2z}\cot(\theta/2)\,dz = \frac12 \cot(\theta/2)\log\left(\frac{R}{r}\cos\theta\right).
\end{equation}
Combining the proximal and distal lengths completes the proof,
\begin{equation}
\KP_\St(r\csc\theta,R\csc\theta)= \frac12 \left[ \cot(\theta/2)\log\left(\frac{R}{r}\cos\theta\right)+\log\frac{1+\tan(\theta/2)}{1-\tan(\theta/2)} \right]. 
\qedhere
\end{equation} 
\end{proof}

\begin{lemma}\label{lem:DistanceMonotone}
Let $h(r_1,r_2,d)$ denote the hyperbolic distance between the centers in a stadium $\St(r_1,r_2,d)$. Then $h(r_1,r_2,d)$ is an increasing function in $d$.
\end{lemma}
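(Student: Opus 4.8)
The plan is to fix $r_1,r_2$ and show that $d\mapsto h(r_1,r_2,d)$ is increasing by comparing two stadia $\St(r_1,r_2,d)$ and $\St(r_1,r_2,d')$ with $d<d'$. The natural tool is the domain monotonicity of the hyperbolic metric \eqref{eq:HyperbolicDomainMonotonic}, but since neither stadium contains the other (they have the same disk radii but different center separations), a direct inclusion is unavailable. Instead I would use a rigid motion to place the disks advantageously: after a translation and rotation we may assume the two centers of the larger stadium $\St(r_1,r_2,d')$ are $-d'/2$ and $d'/2$ on the real axis, and likewise center the smaller stadium's disks at $-d/2$ and $d/2$. The key geometric observation is that the hyperbolic geodesic between the centers of $D_{r_1}$ and $D_{r_2}$ lies on the axis of symmetry (the real axis), so it suffices to compare the \emph{densities} $\lambda_\St$ along this segment and also account for the fact that the longer stadium contributes a longer segment.

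The cleanest route is a two-step comparison. First, using the explicit KP-distance formula in Lemma~\ref{KPdistance} together with the comparison \eqref{KPcomparison}, one sees the KP distance $\KP_\St(r_1,r_2,d)$ between the centers: in regime \eqref{eq:KPdistanceA} it is $d/r$, manifestly increasing in $d$; in \eqref{eq:KPdistanceB} it is $\tfrac12\log\frac{R+d}{R-d}$, increasing in $d$ on $(0,R)$; and in \eqref{eq:KPdistanceC}, after noting $\theta=\arcsin\frac{R-r}{d}$ decreases as $d$ increases, one checks that $\tfrac12[\cot(\theta/2)\log(\tfrac{R}{r}\cos\theta)+\log\frac{1+\tan(\theta/2)}{1-\tan(\theta/2)}]$ is increasing (as a composition of the decreasing map $d\mapsto\theta$ with a function of $\theta$ that one verifies to be decreasing in $\theta$). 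One should also confirm continuity across the regime boundaries $d=R-r$ and $d=R\tan(\theta/2)$, which Lemma~\ref{KPdistance} implicitly guarantees. This gives monotonicity of the \emph{KP} distance between the centers; then \eqref{KPcomparison} only yields $h\ge\tfrac12\KP$ and $h\le\KP$, which is not by itself enough to transfer monotonicity to $h$.

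So the honest approach must work with the hyperbolic metric directly. Here is the plan I would actually execute: put both stadia symmetric about the real axis as above, and consider the conformal map. Alternatively — and more robustly — I would exploit that a stadium with larger $d$ can be obtained from one with smaller $d$ by "cutting and inserting a parallel strip": the region $\St(r_1,r_2,d')$ contains a translated copy of the half of $\St(r_1,r_2,d)$ near each cap, glued along a straight strip, and a reflection/Schwarz argument across the perpendicular bisector reduces the claim to a monotonicity statement for hyperbolic distance from a fixed boundary arc as the domain is elongated. Concretely, let $\Omega_d=\St(r_1,r_2,d)$ with caps centered at $c_1(d),c_2(d)$ and let $g_d\colon\D\to\Omega_d$ be conformal. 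The hyperbolic distance between the centers equals $h_\D(g_d^{-1}(c_1),g_d^{-1}(c_2))$, and the elongation induces an embedding of $\Omega_d$ into $\Omega_{d'}$ composed with a shift that fixes one cap; monotonicity \eqref{eq:HyperbolicDomainMonotonic} applied to this embedding, plus the fact that the image of $c_2(d)$ sits on the geodesic ray toward $c_2(d')$, yields $h(r_1,r_2,d')\ge h(r_1,r_2,d)$.

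The main obstacle is precisely this last reduction: making rigorous the claim that elongating the stadium moves the two center-preimages in $\D$ strictly farther apart. The difficulty is that translating one half of the stadium does not give a subdomain of the other, so \eqref{eq:HyperbolicDomainMonotonic} is not immediately applicable; one needs the symmetry of the stadium about its long axis and about the perpendicular bisector to set up the correct monotone family of subdomains (for instance, fixing the disk $D_{r_1}$ and its half-stadium, and observing $\St(r_1,r_2,d)\cap\{$left half$\}\subset\St(r_1,r_2,d')\cap\{$left half$\}$ after aligning the left caps). I expect that once the correct nested family is identified, the rest is a short application of Schwarz–Pick; but pinning down that family — and verifying the center of $D_{r_2}$ lands on the relevant geodesic so that the distance comparison is not lost — is where the real work lies. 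A fallback, if the geometric argument proves fussy, is to combine the explicit formulas of Lemma~\ref{KPdistance} with the explicit hyperbolic density in a disk \eqref{hyp-density-in-disk} on the portions of the center-segment lying in the caps, reducing the problem to an integral that can be differentiated in $d$ directly.
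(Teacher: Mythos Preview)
Your proposal does not actually prove the lemma: each of your three approaches is either abandoned (the KP computation, where you correctly note that the comparison~\eqref{KPcomparison} cannot transfer monotonicity from $\KP$ to $h$), left incomplete (the cut-and-insert-a-strip idea, where you yourself identify the obstacle that the translated half-stadium is not a subdomain and the center need not land on the right geodesic), or only sketched as a fallback (direct differentiation of an integral you never write down). So as it stands there is a genuine gap.

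The missing idea is short and avoids all of the difficulties you list. You write that ``neither stadium contains the other (they have the same disk radii but different center separations), a direct inclusion is unavailable.'' The paper's proof fixes exactly this by \emph{dilating} the smaller-$d$ stadium: if $d_1<d_2$ and $\lambda=d_2/d_1$, then scaling $\St(r_1,r_2,d_1)$ by $\lambda$ gives $\St(\lambda r_1,\lambda r_2,d_2)$, and conformal invariance of the hyperbolic metric yields $h(r_1,r_2,d_1)=h(\lambda r_1,\lambda r_2,d_2)$. Now both stadia have the \emph{same} center separation $d_2$, and the one with larger radii $\lambda r_1,\lambda r_2$ contains the other after aligning the center segments by a rigid motion. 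Domain monotonicity~\eqref{eq:HyperbolicDomainMonotonic} then gives $h(\lambda r_1,\lambda r_2,d_2)\le h(r_1,r_2,d_2)$, and combining the two displays finishes the proof in two lines. The scaling trick converts the awkward comparison ``same radii, different $d$'' into the easy comparison ``same $d$, nested radii,'' which is precisely the inclusion you thought was unavailable.
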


\begin{proof}
Fix $r_1$ and $r_2$. Let $d_1<d_2$ and define $\lambda=d_2/d_1$. Dilating $\St(r_1,r_2,d_1)$ by a factor of $\lambda$ and observing conformal invariance of the hyperbolic metric we have 
\begin{equation}\label{eqn:DistanceMonotone-one}
h(r_1,r_2,d_1)=h(\lambda r_1,\lambda r_2, d_2).
\end{equation}
Now consider $\St(r_1,r_2,d_2)$ and $\St(\lambda r_1,\lambda r_2, d_2)$. After a rigid motion, the segments connecting the centers of two stadia are coincident and $\St(r_1,r_2,d_2)\subset\St(\lambda r_1,\lambda r_2, d_2)$. Then by the monotonicity of the hyperbolic metric~\eqref{eq:HyperbolicDomainMonotonic} we have
\begin{equation}\label{eqn:DistanceMonotone-two}
h(\lambda r_1,\lambda r_2,d_2)\le h(r_1,r_2,d_2),
\end{equation}
and combining~\eqref{eqn:DistanceMonotone-one} and~\eqref{eqn:DistanceMonotone-two} gives the result.
\end{proof}

\begin{lemma}\label{ArcsineDefined}
Let $\Omega$ satisfy the $(R_O,R_I,R_C)$ condition in Definition~\ref{RadiiDefinition}. Then \[|R_I-R_C|\le R_O-R_C.\]
\end{lemma}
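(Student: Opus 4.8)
The plan is to split the two-sided inequality $|R_I-R_C|\le R_O-R_C$ into its two one-sided halves and prove each by elementary planar geometry, invoking Remark~\ref{rem:CurvatureRadius} for the substantive one.

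The inequality $R_I-R_C\le R_O-R_C$ is just $R_I\le R_O$, which is immediate from the definitions since $D(0,R_I)\subset\Omega\subset D(0,R_O)$. (As a byproduct, any disk of radius $R_C$ contained in $\Omega\subset D(0,R_O)$ shows $R_C\le R_O$, so $R_O-R_C\ge 0$ and the two-sided statement is meaningful.)

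For the reverse inequality $R_C-R_I\le R_O-R_C$, equivalently $R_O+R_I\ge 2R_C$, I would apply Remark~\ref{rem:CurvatureRadius} at a boundary point nearest the origin. Since $\overline\Omega$ is compact and $\Omega\subsetneq\mathbb C$, choose $w\in\partial\Omega$ with $|w|=\dist(0,\partial\Omega)=R_I$. By Remark~\ref{rem:CurvatureRadius} there is $a\in\Omega$ with $D(a,R_C)\subset\Omega$ and $w\in\partial D(a,R_C)$, so $|a-w|=R_C$, and the triangle inequality gives $|a|\ge|a-w|-|w|=R_C-R_I$. On the other hand $\overline{D(a,R_C)}\subset\overline\Omega\subset\overline{D(0,R_O)}$ forces $|a|+R_C\le R_O$. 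Chaining these, $R_O\ge|a|+R_C\ge 2R_C-R_I$, which is precisely the claim; combining the two halves yields $|R_I-R_C|\le R_O-R_C$.

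I do not anticipate a serious obstacle: the argument is essentially a single application of the triangle inequality to the curvature disk passing through the closest boundary point. The only mildly delicate points are the identification $R_I=\dist(0,\partial\Omega)$ and the inequality $|w|\ge R_I$ for every $w\in\partial\Omega$ (so that a nearest boundary point genuinely has $|w|=R_I$), both of which follow from $\Omega$ being open, bounded, and containing $0$ with $D(0,R_I)\subset\Omega$.
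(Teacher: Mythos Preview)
Your proof is correct and follows the same outline as the paper's: dispose of $R_I\le R_O$ trivially, and for the other half pick a nearest boundary point $w$ with $|w|=R_I$, apply Remark~\ref{rem:CurvatureRadius} to get a disk $D(a,R_C)\subset\Omega$ through $w$, and conclude $2R_C\le R_O+R_I$ from $D(a,R_C)\subset D(0,R_O)$. The one difference is in how the lower bound $|a|\ge R_C-R_I$ is obtained: the paper invokes smoothness of $\partial\Omega$ and convexity to argue that $\partial D(a,R_C)$ is tangent to $\partial D(0,R_I)$ at $w$, which forces $0$, $a$, $w$ to be collinear and hence $|a|=R_C-R_I$ exactly, whereas you bypass this with the bare triangle inequality $|a|\ge|a-w|-|w|$. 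Your route is slightly more economical, since it never needs the tangency discussion; the paper's route, on the other hand, locates $a$ precisely rather than just bounding it.
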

\begin{proof}
If $R_C \le R_I$, then we are trying to show $R_I - R_C \le R_O - R_C$. It is clear from the definitions that $R_I\le R_O$, so the inequality is verified in this case. 

Now assume $R_I < R_C$, we want to show $R_C-R_I \le R_O-R_C$. There exists a point $\xi \in \partial\Omega$ such that $|\xi|=R_I$. The smoothness of $\partial\Omega$ implies that it must be tangent to $\partial D(0,R_I)$ at $\xi$.

Since $D(0,R_I)$ and $\partial\Omega$ are tangent at $\xi$, by Remark~\ref{rem:CurvatureRadius} there must be a disk $D(a,R_C)\subset\Omega$ with $\xi$ as a boundary point. By the definition of $R_C$ and the fact that $\Omega$ is convex, $\partial D(a,R_C)$ must be tangent to $\partial\Omega$ at $\xi$ and therefore tangent to the disk $D(0,R_I)$ at $\xi$. Observe that $D(a,R_C)\subset\Omega\subset D(0,R_O)$. It follows that $2R_C\le\diam\Omega\le R_O+R_I$, and thus $R_C-R_I \le R_O-R_C$.
\end{proof}

The necessary condition in Lemma~\ref{ArcsineDefined} turns out also to be sufficient for the existence of such $\Omega$.

\begin{lemma}\label{lemma-construction}
We can construct a domain $\Omega$ satisfying the $(R_O,R_I,R_C)$ condition for an arbitrary $R_O, R_I, R_C$ so long as they satisfy the relationship $|R_I-R_C|\le R_O-R_C$.
\end{lemma}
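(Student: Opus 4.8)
The plan is to construct $\Omega$ explicitly as a stadium, or as a modification of one, tailored to the three prescribed radii. The natural first candidate is a stadium $\St(R_C, R_C, d)$ for a suitable $d$: this is a $C^{1,1}$ domain by Lemma~\ref{lemma-stadium boundary smooth}, it is convex and a union of disks of radius $R_C$ by construction, its inner radius is exactly $R_C$ (the half-width of the stadium), and its outer radius from the midpoint of the segment joining the two centers is $R_C + d/2$. So taking $d = 2(R_O - R_C)$ and placing $0$ at the midpoint handles the case $R_I = R_C$, where the hypothesis $|R_I - R_C| \le R_O - R_C$ just says $R_O \ge R_C$, i.e.\ $d \ge 0$.

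To handle $R_I < R_C$, I would use an asymmetric stadium $\St(R_C, R_C, d)$ but locate $0$ off the axis: moving the origin a distance $t$ toward one of the flat sides decreases the inner radius to $R_C - t$ while increasing the outer radius. One then has to choose $t$ and $d$ so that $R_C - t = R_I$ and the maximal distance from $0$ to $\partial\Omega$ equals $R_O$; the extreme point of $\partial\Omega$ seen from $0$ will be a point on the far circular cap, and a short computation with the triangle formed by $0$, the far center, and that boundary point gives an equation for $d$ in terms of $R_O$, $R_I$, $R_C$ that is solvable precisely when $R_C - R_I \le R_O - R_C$ (equality forcing $d = 0$, a single disk of radius $R_C$ with $0$ at distance $R_C - R_I$ from the center — which is admissible only if $R_I = R_C$, so strict inequality is needed here, matching the $R_I<R_C$ subcase). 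For $R_I > R_C$ one instead wants the inscribed disk to be strictly larger than the generating disks; here I would take a long thin stadium $\St(R_C,R_C,d)$ and note that near the middle the inscribed circle of radius $R_C$ is not maximal — rather one should use a different shape, e.g.\ the convex hull of $D(0,R_I)$ together with two disks of radius $R_C$ pushed out to the sides, again choosing parameters to hit $R_O$ exactly, using Remark~\ref{rem:CurvatureRadius}-type reasoning to confirm every boundary point lies on an interior disk of radius $R_C$.

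The key steps, in order: (1) verify that a stadium $\St(R_C,R_C,d)$ satisfies the curvature-radius clause for every $d\ge 0$ and compute its inner and outer radii as functions of $d$ and of the chosen location of $0$; (2) in each of the three regimes $R_I = R_C$, $R_I < R_C$, $R_I > R_C$, solve the resulting one- or two-parameter system for $d$ (and the displacement of $0$), checking that a solution with $d\ge 0$ exists exactly under $|R_I - R_C|\le R_O - R_C$; (3) confirm convexity and $C^{1,1}$ smoothness of the constructed domain — for a stadium this is Lemmas~\ref{lemma-stadium boundary smooth}; for the hull-of-three-disks variant one argues similarly that the boundary is an alternation of circular arcs of radius $\ge R_C$ and tangent segments; (4) verify that $R_O$ and $R_I$ are genuinely the minimal circumscribed and maximal inscribed radii, which for a convex domain reduces to exhibiting the touching points.

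The main obstacle I expect is step~(2) in the regime $R_I > R_C$: a plain stadium built from radius-$R_C$ disks can never have an inscribed disk larger than $R_C$, so one is forced into a less symmetric construction, and then checking the curvature clause — that the \emph{whole} domain is a union of radius-$R_C$ disks, not merely that each boundary point lies on one — requires care near where the large central disk meets the small caps. A clean way around this may be to take the convex hull of a family of radius-$R_C$ disks whose centers trace out a short arc or segment positioned so that the envelope has inner radius $R_I$; the covering property is then automatic and only the outer-radius equation must be solved, which again is a monotone function of the single free parameter and runs over exactly the interval forced by $|R_I - R_C|\le R_O-R_C$.
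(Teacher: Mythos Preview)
Your plan is workable in outline but considerably more complicated than the paper's, and it contains one genuine slip. The paper's key simplification is to construct the \emph{center set} $K$ first and then take the Minkowski sum $\Omega=\bigcup_{z\in K}D(z,R_C)$; this makes the union-of-$R_C$-disks clause automatic and reduces everything to choosing a convex $K$ with the right extremal distances from $0$. Concretely: for $R_I\le R_C$ take $K=[R_C-R_I,\,R_O-R_C]$ (so $0$ stays on the axis, sitting inside the left cap rather than being pushed off-axis), and for $R_I>R_C$ take $K$ to be the convex hull of $D(0,R_I-R_C)\cup\{R_O-R_C\}$. Both give stadia, and checking the three radii is a one-line computation in each case.

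Your off-axis construction for $R_I<R_C$ does work, but your analysis of the equality case is wrong: when $R_O-R_C=R_C-R_I$ and $d=0$ you get a disk of radius $R_C$ with $0$ at distance $R_C-R_I$ from its center, which has inner radius $R_I$ and outer radius $2R_C-R_I=R_O$ --- perfectly admissible, not ``only if $R_I=R_C$''. For $R_I>R_C$ your hull-of-three-disks idea would also succeed (it is itself a Minkowski sum of a convex set with $D(0,R_C)$, so the covering property holds), but you never actually carry it out; the paper's one-sided version avoids the symmetry you introduce and needs no parameter-matching beyond placing the tip of $K$ at $R_O-R_C$.
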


\begin{proof}
If $R_I>R_C$, let $K$ be the closed Euclidean convex hull of the set $D(0,R_I-R_C)\cup\{R_O-R_C\}$. Otherwise, let $K$ be the line segment $[R_C-R_I,R_O-R_C]$. Define $\Omega=\bigcup\limits_{z\in K} D(z,R_C)$.

By construction, $\Omega$ is convex and has a $C^{1,1}$-smooth boundary. More specifically, $\Omega$ is a stadium in the sense of Definition~\ref{definition-stadium}. That $\Omega$ has the required values of $R_O$ and $R_I$ is a consequence of the fact that $-R_I$ and $R_O$ are boundary points of $\Omega$, and that $D(0,R_I)\subset\Omega\subset D(0,R_O)$.
\end{proof}

Our main result of this section provides an upper bound on the hyperbolic distance from the base point 0 to any point $a$ such that $D(a,R_C)\subset\Omega$. This bound is given in terms of the outer, inner, and curvature radii of $\Omega$. 

\begin{theorem}\label{DistanceBounds}
Let $\Omega$ satisfy the $(R_O,R_I,R_C)$ condition in Definition~\ref{RadiiDefinition} and let $a$ be any point such that $D(a,R_C)\subset\Omega$. Let $R=\max(R_C, R_I)$, $r=\min(R_C,R_I)$, $d=R_O-R_C$, and $\theta=\arcsin\frac{R-r}{d}$. Define $F(R_O,R_I,R_C)$ as
\begin{subequations}
\begin{alignat}{2}
&\frac{d}{R}
    &&\tif r=R,\label{eq:DistanceBoundsTheoremA}\\
\text{when $r<R$} & && \nonumber\\
&\frac12\log \frac{R+d}{R-d}
    &&\tif d\le R\tan\frac{\theta}{2}\label{eq:DistanceBoundsTheoremB}\\
&\frac12\left[\cot\frac{\theta}{2}\log\left(\frac{R}{r}\cos\theta\right)+\log\frac{1+\tan(\theta/2)}{1-\tan(\theta/2)}\right]\quad    &&\tif d> R\tan\frac{\theta}{2}.
    \label{eq:DistanceBoundsTheoremC}
\end{alignat}
\end{subequations}
Then $h_\Omega(0, a)\le F(R_O,R_I,R_C)$. 
\end{theorem}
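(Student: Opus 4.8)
The strategy is to dominate $\Omega$ by a suitable stadium and then invoke the monotonicity properties already established. First I would observe that, by Remark~\ref{rem:CurvatureRadius} applied to the boundary point $\xi\in\partial\Omega$ with $|\xi|=R_I$, there is a disk $D(b,R_C)\subset\Omega$ tangent internally to $D(0,R_I)$ at $\xi$; after a rigid motion fixing $0$ we may assume $b$ lies on the negative real axis, so that $D(b,R_C)$ is the disk ``on the inner side.'' On the other hand, by hypothesis $D(a,R_C)\subset\Omega$ for the given point $a$, and $D(a,R_C)\subset D(0,R_O)$ forces $|a|\le R_O-R_C=d$. The key geometric claim is then that $\Omega$ is contained in a stadium of the form $\St(R,r,d')$ for an appropriate $d'\le d$, built from a disk of radius $R=\max(R_C,R_I)$ and a disk of radius $r=\min(R_C,R_I)$, positioned so that the segment $[0,a]$ lies on the segment joining the two centers. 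Concretely: the disk $D(0,R_I)\subset\Omega$ and the disk $D(a,R_C)\subset\Omega$ both lie in $\Omega$, hence so does their convex hull, which is a stadium; and by convexity of $\Omega$ together with the curvature-radius condition, $\Omega$ itself sits inside the ``outer parallel'' stadium obtained by pushing the boundary out. This is where I expect the main work to lie.

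Granting the containment $\St(r_1,r_2,d')\subset\Omega$ with $0$ and $a$ the two centers, monotonicity of the hyperbolic metric~\eqref{eq:HyperbolicDomainMonotonic} gives
\[
h_\Omega(0,a)\le h_{\St(r_1,r_2,d')}(0,a).
\]
Next, by the comparison~\eqref{KPcomparison} between the KP and hyperbolic densities, $h\le\mu$ pointwise, so $h_{\St}(0,a)\le\KP_{\St}(0,a)$, and this last quantity is computed \emph{exactly} by Lemma~\ref{KPdistance} as the piecewise expression~\eqref{eq:KPdistanceA}--\eqref{eq:KPdistanceC}. Finally, Lemma~\ref{lem:DistanceMonotone} lets me replace $d'$ by the larger value $d=R_O-R_C$ (since $h$, and one checks also the KP formula, is increasing in the center-distance), and Lemma~\ref{ArcsineDefined} guarantees $|R_I-R_C|\le d$ so that $\theta=\arcsin\frac{R-r}{d}$ is well defined. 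Assembling these inequalities yields $h_\Omega(0,a)\le F(R_O,R_I,R_C)$ with $F$ exactly the piecewise formula in the statement, matching Lemma~\ref{KPdistance} term by term.

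One subtlety to handle carefully is the degenerate case distinction: when $R_C\ge R_I$ we have $R=R_C$, $r=R_I$, and the ``inner'' disk $D(0,R_I)$ really is the smaller disk of the stadium with $0$ as its center; when $R_I>R_C$ we have $R=R_I$, $r=R_C$, and it is $D(a,R_C)$ that is the smaller disk. In both cases the segment between the two relevant centers passes through $0$ and $a$, but I should make sure the orientation is such that the stadium constructed is the one whose KP geometry Lemma~\ref{KPdistance} describes (in particular that $0$ is indeed a center, not an interior point off the axis). The equality $r=R$ case ($R_C=R_I$) falls out of~\eqref{eq:KPdistanceA} after noting the stadium degenerates to a strip-like region of width $2R$.

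\textbf{Main obstacle.} The crux is the geometric containment $\Omega\subset\St(R,r,d)$ (up to the monotonicity adjustment in $d$): I must show that a convex $C^{1,1}$ domain which is a union of radius-$R_C$ disks, contains $D(0,R_I)$, and is contained in $D(0,R_O)$, can be squeezed between an inner stadium and an outer stadium sharing the axis through $0$ and $a$. The inner containment (stadium $\subset\Omega$) is immediate from convexity; the outer containment is the delicate part and should use that each boundary point of $\Omega$ is touched by an internal disk of radius $R_C$ (Remark~\ref{rem:CurvatureRadius}) together with the supporting-line description of convexity, to bound how far $\partial\Omega$ can bulge transverse to the axis. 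Everything after that containment is bookkeeping with the three lemmas above.
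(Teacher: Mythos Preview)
Your chain of inequalities
\[
h_\Omega(0,a)\;\le\; h_{\St(R_I,R_C,|a|)}(0,a)\;\le\;\KP_{\St(R_I,R_C,|a|)}(0,a)\;\le\;\KP_{\St(R,r,d)}(0,a)=F(R_O,R_I,R_C)
\]
is exactly the paper's argument, and the ingredients you cite (convexity gives $\St\subset\Omega$; \eqref{KPcomparison}; Lemma~\ref{lem:DistanceMonotone}; Lemma~\ref{ArcsineDefined}; Lemma~\ref{KPdistance}) are the right ones. The observation $|a|\le R_O-R_C=d$ is all that is needed to pass from $|a|$ to $d$.

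The problem is that you have misidentified the ``crux.'' You do \emph{not} need the outer containment $\Omega\subset\St(R,r,d)$ at all. Domain monotonicity~\eqref{eq:HyperbolicDomainMonotonic} goes the other way: a \emph{larger} domain has \emph{smaller} hyperbolic distance. So if you had $\Omega\subset\St$, you would get $h_{\St}(0,a)\le h_\Omega(0,a)$, which is a lower bound, useless here. The inequality $h_\Omega(0,a)\le h_{\St}(0,a)$ that you correctly wrote in the second paragraph comes solely from the \emph{inner} containment $\St(R_I,R_C,|a|)\subset\Omega$, which, as you yourself note, is immediate from convexity of $\Omega$ since $\Omega$ contains both $D(0,R_I)$ and $D(a,R_C)$. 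That inner containment is the whole geometric content of the proof; there is no ``delicate part.''

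Moreover, the outer containment you propose is false in general: take $\Omega$ to be a rounded square centered at $0$ and $a$ a point near the origin---no stadium with axis through $0$ and $a$ and radii $R_I,R_C$ will swallow $\Omega$. So the step you flag as the main obstacle is both unnecessary and unachievable. Drop it (and the auxiliary disk $D(b,R_C)$ at the inner boundary point, which plays no role), and what remains is a complete proof identical to the paper's.
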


\begin{proof}
First, note that Lemma~\ref{ArcsineDefined} allows us to define $\theta$ in this way. Observe that $D(a,R_C)\subset\Omega\subset D(0,R_O)$, thus $|a|+R_C\le R_O$ and $|a|\le d$. Since $\Omega$ is a convex domain containing the disks $D(0, R_I)$ and $D(a,R_C)$, it contains the corresponding stadium $\St(R_I, R_C, |a|)$. For containment the position of the stadium is important, so to be clear $\St$ is the convex hull of $D(0,R_I)\cup D(a,R_C)$. It follows from the domain monotonicity of the hyperbolic metric and Lemma~\ref{lem:DistanceMonotone} that
\begin{equation}\label{A}
h_\Omega(0,a) \le h(R_I,R_C,|a|)
= h(R,r,|a|) \le h(R,r,d)
\end{equation}
where $h$ taking three arguments is as in Lemma~\ref{lem:DistanceMonotone}. Since the hyperbolic distance is majorized by the KP distance~\eqref{KPcomparison}, the claim follows from the explicit formulas for KP distance from Lemma~\ref{KPdistance}.

In each of the three cases we find the longest possible line segment from $0$ to an allowable $a$, construct $\St(R_I,R_C,R_O-R_C)=\St(R,r,d)\subset\Omega$ around the segment, and find its $\KP_{\St}$ length. The formula in~\eqref{eq:DistanceBoundsTheoremA} corresponds to the case where the KP metric has constant density along the segment. The formula in~\eqref{eq:DistanceBoundsTheoremB} corresponds to the case where the $\KP_{\St}$ extremal disk is the same at every point of the segment. The formula in~\eqref{eq:DistanceBoundsTheoremC} corresponds to the case where the extremal disk and density vary along the segment.
\end{proof}

\section{Estimates for the derivative of a conformal map}\label{sec:derivative} 

We are now ready to prove the main result. 

\begin{theorem}\label{Main}
Let $\Omega$ satisfy the $(R_O,R_I,R_C)$ condition in Definition~\ref{RadiiDefinition}. Then for any conformal map $f\colon  \D\onto\Omega$ fixing $0$ we have
\begin{equation}\label{Main-theorem-formula}
\|f'\|_{H^\infty} \le R_Ce^{2F(R_O,R_I,R_C)}
\end{equation}
where $F$ is as in Theorem~\ref{DistanceBounds}.
\end{theorem}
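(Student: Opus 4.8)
The plan is to localize the problem at a single boundary point where $|f'|$ is largest, to squeeze an inscribed disk of radius $R_C$ between $\Omega$ and that point, and to turn the hyperbolic‑distance bound of Theorem~\ref{DistanceBounds} into a derivative bound using Julia's lemma (the boundary Schwarz lemma). The quantity $F$ will enter through Theorem~\ref{DistanceBounds} applied to the centre of the inscribed disk.

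First I would set up the boundary data. Since $\partial\Omega$ is $C^{1,1}$, Kellogg's theorem (already used in the introduction) makes $f$ extend to an element of $C^1(\overline{\D})$ with $f'$ continuous and zero‑free on $\overline{\D}$, and likewise $f^{-1}\in C^1(\overline{\Omega})$ with non‑vanishing derivative. Hence $\log|f'|$ is continuous on $\overline{\D}$ and harmonic in $\D$, so by the maximum principle $\|f'\|_{H^\infty}=|f'(\zeta_0)|$ for some $\zeta_0\in\partial\D$; put $\eta=f(\zeta_0)\in\partial\Omega$. By Remark~\ref{rem:CurvatureRadius} choose $a\in\Omega$ with $D(a,R_C)\subset\Omega$ and $\eta\in\partial D(a,R_C)$, and let $g(z)=a+(\eta-a)z$, the conformal map of $\D$ onto $D(a,R_C)$, for which $g(0)=a$, $g(1)=\eta$ and $|g'|\equiv R_C$. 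Then $\Psi:=f^{-1}\circ g$ is a holomorphic self‑map of $\D$ (because $g(\D)=D(a,R_C)\subset\Omega$), it is $C^1$ up to $z=1$ with $\Psi(1)=f^{-1}(\eta)=\zeta_0\in\partial\D$, and the chain rule gives $\Psi'(1)=(f^{-1})'(\eta)\,g'(1)=(\eta-a)/f'(\zeta_0)$, so $|\Psi'(1)|=R_C/|f'(\zeta_0)|$.

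Next I would bound $|\Psi'(1)|$ from below and invoke Theorem~\ref{DistanceBounds}. Writing $p=\Psi(0)=f^{-1}(a)$ and $M(z)=(z-p)/(1-\bar p z)$, the map $M\circ\Psi$ is a self‑map of $\D$ fixing $0$, so the boundary Schwarz lemma applied at $z=1$ gives $|(M\circ\Psi)'(1)|\ge1$, whence $|\Psi'(1)|\ge1/|M'(\zeta_0)|\ge\frac{1-|p|}{1+|p|}=e^{-2h_\D(0,p)}$, using $|M'(\zeta_0)|=\frac{1-|p|^2}{|1-\bar p\zeta_0|^2}\le\frac{1+|p|}{1-|p|}$ and $h_\D(0,p)=\frac12\log\frac{1+|p|}{1-|p|}$. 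Conformal invariance of the hyperbolic metric together with $f(0)=0$, $f(p)=a$ gives $h_\D(0,p)=h_\Omega(0,a)$, and since $D(a,R_C)\subset\Omega$, Theorem~\ref{DistanceBounds} yields $h_\Omega(0,a)\le F(R_O,R_I,R_C)$. Combining,
\[
\frac{R_C}{|f'(\zeta_0)|}=|\Psi'(1)|\ge e^{-2h_\Omega(0,a)}\ge e^{-2F(R_O,R_I,R_C)},
\]
which rearranges to \eqref{Main-theorem-formula}. For the equality statement, when $\Omega$ is a disk containing $0$ the only inscribed disk of radius $R_C$ is $\Omega$ itself, so $\Psi$ is an automorphism of $\D$ (equality in the boundary Schwarz lemma) and a direct integration of \eqref{hyp-density-in-disk} shows $h_\Omega(0,a)=F$; hence every inequality above becomes an equality.

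I do not anticipate a deep obstacle, since the argument is short once the ingredients are in place. The one point demanding care is the boundary bookkeeping: one must know that $f'$ and $(f^{-1})'$ extend continuously and without zeros to the boundary, which is what legitimizes both the reduction of $\|f'\|_{H^\infty}$ to $\max_{\partial\D}|f'|$ and the chain‑rule evaluation of $\Psi'(1)$ — and this is exactly the content of Kellogg's theorem for $C^{1,1}$ domains. A secondary check is the sharpness claim, namely that Theorem~\ref{DistanceBounds} and the boundary Schwarz lemma are simultaneously tight precisely when $\Omega$ is a disk.
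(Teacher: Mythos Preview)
Your proof is correct and shares the same geometric core as the paper's---pick a boundary point, inscribe a disk $D(a,R_C)$ touching $\partial\Omega$ there, and invoke Theorem~\ref{DistanceBounds} together with conformal invariance of $h$---but the analytic wrapping is genuinely different. The paper never composes with $f^{-1}$ or appeals to Julia's lemma; instead it works inside $\D$, reducing $\|f'\|_{H^\infty}\le L$ to the estimate $\limsup_{|z|\nearrow1}\dist(f(z),\partial\Omega)/(1-|z|)\le L$, and then for $z$ near the boundary uses the triangle inequality $h_\Omega(0,f(z))\le h_\Omega(0,a)+h_\Omega(a,f(z))$ with the elementary bound $h_\Omega(a,f(z))\le h_{D(a,R_C)}(a,f(z))<\tfrac12\log\frac{2R_C}{d}$ to reach a contradiction. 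Your route is cleaner: the boundary Schwarz lemma packages the whole limiting argument into a single inequality $|\Psi'(1)|\ge\frac{1-|p|}{1+|p|}$, and the sharpness discussion drops out immediately from the equality case of that lemma. The paper's route, on the other hand, stays entirely within hyperbolic-distance estimates and the maximum principle, so it needs nothing beyond \eqref{hyp-density-in-disk}, \eqref{eq:HyperbolicDomainMonotonic}, and Kellogg's theorem; this makes it slightly more self-contained but at the cost of the limsup/contradiction bookkeeping.
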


\begin{proof}
By assumption, $\Omega$ has a smooth boundary and $f'$ exists on $\partial\D$. Take any number $L > R_C e^{2F(R_O,R_I,R_C)}$. It suffices to show that $|f'|\le L$ in $\D$, which we will do by proving that this inequality holds on the boundary and then applying the maximum principle. More specifically, it suffices to show
\begin{equation}\label{p1}
    \limsup_{|z|\nearrow 1}\frac{\dist(f(z),\partial\Omega)}{1-|z|}\le L.
\end{equation}

Fix $z\in\D$. Let $d=\dist(f(z),\partial\Omega)$, since we are interested in the limit as $d\to 0$, we may assume $d<R_C$. We will show that 
\begin{equation}\label{p2}
    \frac{d}{L} \le 1-|z|
\end{equation}
for sufficiently small $d$, thus establishing the inequality in~\eqref{p1}.

We choose a point $w\in\partial\Omega$ such that $|f(z)-w|=d$. By Remark~\ref{rem:CurvatureRadius} there is a disk $D=D(a,R_C)$ that has $w$ on its boundary and is contained in $\Omega$. Observe that the smoothness of $\partial\Omega$ and $\partial D$ at $w$ require that $f(z)$ lie on the radius of $D$ that ends at $w$, and therefore $|f(z)-a|=R_C-d$. Keeping in mind that the hyperbolic metric is monotone with respect to domain and that the formula for hyperbolic distance in a disk along a radius is well known, observe
\begin{equation}\label{p3}
h_{\Omega}(f(z),a)\le h_{D(a,R_C)}(f(z),a)=\frac12\log\frac{2R_C-d}{d}<\frac12\log\frac{2R_C}{d}.
\end{equation}
Next we estimate $h_{\Omega}(a,0)$ using the KP estimate from Theorem~\ref{DistanceBounds}:
\begin{equation}\label{p4} h_{\Omega}(a,0) \le F(R_O,R_I,R_C). \end{equation}
Now suppose for the sake of contradiction that~\eqref{p2} is false, this implies
\[ 1-|z|<d/L \ssp{3}{\tand} 1+|z|>2-d/L.\]
By conformal invariance of the hyperbolic metric, $h_{\Omega}(f(z),0)=h_\D(z,0)$, so

\begin{equation}\label{p5}
    h_\Omega(f(z),0)=\frac12\log\frac{1+|z|}{1-|z|} > \frac12\log\frac{2-d/L}{d/L}.
\end{equation}

Using the triangle inequality to combine this with~\eqref{p3} and~\eqref{p4}, we get
\[ \frac12\log\frac{2-d/L}{d/L} < \frac12\log\frac{2R_C}{d}+F(R_O,R_I,R_C) \]
which can be be rearranged to
\[ L-\frac{d}{2}<R_C e^{2F(R_O,R_I,R_C)}. \]
But $R_C e^{2F(R_O,R_I,R_C)}<L$, so we have a contradiction when $d$ is sufficiently small. This contradiction proves~\eqref{p2}.
\end{proof}

\section{Examples}\label{sec:examples}

\begin{example}
Let $\Omega=\{|z|<r\}$, then $R_O=R_I=R_C=r$. Theorem~\ref{Main} says that for all conformal $f\colon \D\onto\Omega$ fixing 0, $\|f'\|_{H^\infty} \le re^0=r$. The function $f(z)=rz$ shows the bound is attained in this case.
\end{example}

This can be generalized to show that the bound in Theorem~\ref{Main} is sharp whenever $\Omega$ is a disk containing 0.

\begin{proposition}\label{prop-sharp in disks} Let $\Omega=D(a, r)$ with $0\le |a|<r$. Then the bound in Theorem~\ref{Main} is sharp for a conformal map $f\colon \D\onto\Omega$. 
\end{proposition}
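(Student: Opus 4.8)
The plan is to exhibit an explicit conformal map for which equality holds in~\eqref{Main-theorem-formula}. For $\Omega = D(a,r)$ with $0 \le |a| < r$, first I would compute the three radii. We have $R_O = |a| + r$ (the smallest concentric disk about $0$ containing $\Omega$), $R_I = r - |a|$ (the largest concentric disk contained in $\Omega$), and $R_C = r$, since $\Omega$ is a single disk of radius $r$ and hence trivially a union of disks of radius $r$ (and no larger radius works unless $a=0$). One then checks these satisfy the constraint $|R_I - R_C| \le R_O - R_C$ of Lemma~\ref{ArcsineDefined}, which here reads $|a| \le |a|$, so we are in the equality/degenerate situation.

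Next I would evaluate $F(R_O,R_I,R_C)$ for these values. Since $R_I = r - |a| \le r = R_C$, we have $R = \max(R_C,R_I) = R_C = r$ and $r_{\min} = \min(R_C,R_I) = R_I = r-|a|$, while $d = R_O - R_C = |a|$. If $|a| = 0$ then $R = r_{\min}$ and we are in case~\eqref{eq:DistanceBoundsTheoremA} with $F = d/R = 0$. If $|a| > 0$ then $r_{\min} < R$ and $\theta = \arcsin\frac{R - r_{\min}}{d} = \arcsin\frac{|a|}{|a|} = \arcsin 1 = \pi/2$, so $\tan(\theta/2) = 1$ and the threshold $R\tan(\theta/2) = r$; since $d = |a| < r$ we land in case~\eqref{eq:DistanceBoundsTheoremB}, giving
\[
F(R_O,R_I,R_C) = \frac12 \log\frac{R+d}{R-d} = \frac12\log\frac{r+|a|}{r-|a|}.
\]
Therefore the bound of Theorem~\ref{Main} asserts $\|f'\|_{H^\infty} \le r \cdot \exp\!\left(\log\frac{r+|a|}{r-|a|}\right) = \dfrac{r(r+|a|)}{r-|a|}$ (and $\le r$ when $a=0$).

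Then I would produce the extremal map directly. Take $f(z) = a + rz$, a conformal bijection $\D \onto D(a,r)$ — but this does not fix $0$ unless $a = 0$. Composing with a disk automorphism, set $f(z) = a + r\varphi(z)$ where $\varphi$ is the Möbius automorphism of $\D$ with $\varphi(0) = -a/r$ (so that $f(0) = 0$), namely $\varphi(z) = \dfrac{z - a/r}{1 - (\bar a/r) z}$ after a rotation; since we may rotate $\Omega$ assume $a = |a| \ge 0$ is real, so $\varphi(z) = \dfrac{z - a/r}{1 - (a/r) z}$. Then $f'(z) = r\varphi'(z) = r \cdot \dfrac{1 - (a/r)^2}{(1 - (a/r)z)^2}$, whose modulus on $\D$ is maximized at $z = -1$, giving $|f'(-1)| = r \cdot \dfrac{1-(a/r)^2}{(1+a/r)^2} = r\cdot\dfrac{r-a}{r+a}$... which is the \emph{minimum}; the supremum is attained at $z = 1$, giving $\|f'\|_{H^\infty} = r\cdot\dfrac{1-(a/r)^2}{(1-a/r)^2} = r\cdot\dfrac{r+a}{r-a} = \dfrac{r(r+a)}{r-a}$. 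This matches $R_C e^{2F}$ exactly, so the bound is sharp. For $a=0$, $f(z)=rz$ gives $\|f'\|_{H^\infty}=r=R_Ce^0$.

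The only genuine subtlety — and the step I would be most careful about — is verifying $R_C = r$ rather than some value forced by a different covering: one must confirm that $\Omega = D(a,r)$ cannot be written as a union of open disks of radius strictly greater than $r$, which is immediate since any such disk meeting $\Omega$ would, if contained in $\Omega$, have radius at most $r$; and conversely $\Omega$ trivially equals the union of the single disk $D(a,r)$ of radius $r$, so $R_C = r$ is exactly the curvature radius. Everything else is the routine Möbius computation sketched above, together with checking which case of $F$ applies — the main care being the degenerate $\theta = \pi/2$ boundary between cases~\eqref{eq:DistanceBoundsTheoremB} and~\eqref{eq:DistanceBoundsTheoremC}, where one verifies $d < R\tan(\theta/2)$ holds strictly (as $|a| < r$) so that~\eqref{eq:DistanceBoundsTheoremB} is the operative formula.
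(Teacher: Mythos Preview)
Your proposal is correct and follows essentially the same route as the paper: identify $(R_O,R_I,R_C)=(r+|a|,\,r-|a|,\,r)$, check that case~\eqref{eq:DistanceBoundsTheoremB} applies so that $R_Ce^{2F}=r(r+|a|)/(r-|a|)$, and exhibit the M\"obius map $z\mapsto a+r\dfrac{z-a/r}{1-(a/r)z}$ whose derivative attains this value at $z=1$. The only slip is the parenthetical ``no larger radius works unless $a=0$'': even when $a=0$ no radius larger than $r$ works, but this is irrelevant to the argument.
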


\begin{proof} 
After a rotation about the origin, we may assume $0\le a <r$. Then $\Omega$ satisfies the condition ($R_O=r+a,R_I=r-a,R_C=r$), and one can check that Theorem~\ref{Main} gives 
\begin{equation}\label{prop-disk-bound}
\|f'\|_{H^\infty}\le r\frac{r+a}{r-a}.    
\end{equation}

Take \[f_1(z)=\frac{z-\frac{a}{r}}{1-\frac{a}{r}z}, \quad f_2(z)=z+\frac{a}{r}, \quad\text{and }f_3(z)=rz.\] Then the conformal mapping $f_3\circ f_2\circ f_1\colon\D\onto D(a,r)$ fixes 0, and its derivative attains the bound in~\eqref{prop-disk-bound} at $z=1$.
\end{proof}

To illustrate the improvement over the bound given in~\eqref{old-bound}, we close with two more examples. 

\begin{example}
One of the examples considered in~\cite{Kovalev2017} is a rounded triangle with $R_O=0.6,R_I=0.5,R_C=0.4$. 
For this domain, the bound~\eqref{old-bound} is 
$\|f'\|_{H^\infty}\le 0.977$. Theorem~\ref{Main} improves this to $\|f'\|_{H^\infty}\le 0.931$.
\end{example}

\begin{example} 
For the domain in Proposition~\ref{prop-sharp in disks} with $0\le a<r$ the bound~\eqref{old-bound} is 
\begin{equation}\label{old-in-disks}
\|f'\| \le \frac{r^3}{(r-a)^2}.     
\end{equation}
The ratio of two bounds~\eqref{prop-disk-bound} and ~\eqref{old-in-disks} tends to $0$ as $a\to r$, indicating a substantial improvement. 
For a specific example, let $a=1$ and $r=2$, so $\Omega=D(1,2)$. The bound in~\eqref{prop-disk-bound} becomes $\|f'\|_{H^\infty} \le 6$, which is sharp as noted above. In contrast~\eqref{old-bound} gives $\|f'\|_{H^\infty} \le 8$ for this example. 
\end{example}

\section*{Acknowledgments}

This paper is based on a part of a PhD thesis written by the author under the
supervision of Leonid Kovalev. The author thanks the anonymous referees for many useful suggestions in revising this paper.

\bibliography{references.bib} 
\bibliographystyle{plain} 
\end{document}